\theoremstyle{plain} 
\newtheorem{theorem}{\indent\sc Theorem}[section]
\newtheorem{corollary}[theorem]{\indent\sc Corollary}
\newtheorem{proposition}[theorem]{\indent\sc Proposition}
\theoremstyle{definition} 
\newtheorem{definition}[theorem]{\indent\sc Definition}
\newtheorem{example}[theorem]{\indent\sc Example}
\title{$\eta$-Ricci solitons on para-Kenmotsu manifolds} 
\author{Adara M. Blaga}
\date{} 
\begin{document}

\maketitle

\markboth{{\small\it {\hspace{8cm} $\eta$-Ricci solitons on para-Kenmotsu manifolds}}}{\small\it{$\eta$-Ricci solitons on para-Kenmotsu manifolds
\hspace{8cm}}}

\footnote{ 
2010 \textit{Mathematics Subject Classification}.
Primary 53C21, 53C44; Secondary 53C25.
}
\footnote{ 
\textit{Key words and phrases}.
$\eta$-Ricci solitons, almost paracontact structure.
}

\begin{abstract}
In the context of paracontact geometry, $\eta$-Ricci solitons are considered on manifolds satisfying certain curvature conditions: $(\xi,\cdot)_{R}\cdot S=0$, $(\xi,\cdot)_{S}\cdot R=0$, \linebreak $(\xi,\cdot)_{W_2}\cdot S=0$ and $(\xi,\cdot)_{S}\cdot W_2=0$. We prove that on a para-Kenmotsu manifold $(M,\varphi,\xi,\eta,g)$, the existence of an $\eta$-Ricci soliton implies that $(M,g)$ is quasi-Einstein and if the Ricci curvature satisfies $(\xi,\cdot)_{R}\cdot S=0$, then $(M,g)$ is Einstein. Conversely, we give a sufficient condition for the existence of an $\eta$-Ricci soliton on a para-Kenmotsu manifold.
\end{abstract}

\section{Introduction}

Ricci solitons represent a natural generalization of Einstein metrics on a Riemannian manifold, being generalized fixed points of Hamilton's Ricci flow $\frac{\partial }{\partial t}g=-2S$ \cite{ham}. The evolution equation defining the Ricci flow is a kind of nonlinear diffusion equation, an analogue of the heat equation for metrics. Under the Ricci flow, a metric can be improved to evolve into a more canonical one by smoothing out its irregularities, depending on the Ricci curvature of the manifold: it will expand in the directions of negative Ricci curvature and shrink in the positive case. Ricci solitons have been studied in many contexts: on K\"{a}hler manifolds \cite{cho}, on contact and Lorentzian manifolds \cite{bag}, \cite{cal}, \cite{ing}, \cite{sha}, \cite{tri}, on Sasakian \cite{fu}, \cite{he}, $\alpha$-Sasakian \cite{ing} and $K$-contact manifolds \cite{sha}, on Kenmotsu \cite{ba}, \cite{na} and $f$-Kenmotsu manifolds \cite{cal} etc. In paracontact geometry, Ricci solitons firstly appeared in the paper of G. Calvaruso and D. Perrone \cite{calv}. Recently, C. L. Bejan and M. Crasmareanu studied Ricci solitons on $3$-dimensional normal paracontact manifolds \cite{cr}.

\medskip

A more general notion is that of \textit{$\eta$-Ricci soliton} introduced by J. T. Cho and M. Kimura \cite{ch}, which was treated by C. C\u alin and M. Crasmareanu on Hopf hypersurfaces in complex space forms \cite{cacr}.

\medskip

In the present paper we shall consider $\eta$-Ricci solitons in the context of paracontact geometry, precisely, on a para-Kenmotsu manifold which satisfies certain curvature properties: $(\xi,\cdot)_{R}\cdot S=0$, $(\xi,\cdot)_{S}\cdot R=0$, $(\xi,\cdot)_{W_2}\cdot S=0$ and $(\xi,\cdot)_{S}\cdot W_2=0$, respectively. Remark that in \cite{na} H. G. Nagaraja and C. R. Premalatha have obtained some results on Ricci solitons satisfying conditions of the following type: $(\xi,\cdot)_{R}\cdot \tilde{C}=0$, $(\xi,\cdot)_{P}\cdot \tilde{C}=0$, $(\xi,\cdot)_{H}\cdot S=0$, $(\xi,\cdot)_{\tilde{C}}\cdot S=0$ and in \cite{ba} C. S. Bagewadi, G. Ingalahalli and S. R. Ashoka treated the cases: $(\xi,\cdot)_{R}\cdot B=0$, $(\xi,\cdot)_{B}\cdot S=0$, $(\xi,\cdot)_{S}\cdot R=0$, $(\xi,\cdot)_{R}\cdot \bar{P}=0$ and $(\xi,\cdot)_{\bar{P}}\cdot S=0$.

\section{Para-Kenmotsu manifolds}

Let $M$ be a $(2n+1)$-dimensional smooth manifold, $\varphi $ a tensor field of $(1, 1)$-type,
$\xi $ a vector field, $\eta $ a $1$-form and $g$ a
pseudo-Riemannian metric on $M$ of signature $(n+1, n)$. We say that $(\varphi , \xi , \eta , g)$ is an \textit{almost paracontact metric structure} on $M$ if \cite{s:z}:
\begin{enumerate}
  \item $\varphi \xi=0$, $\eta \circ \varphi =0$,
  \item $\eta (\xi )=1$, ${\varphi }^2=I_{\mathfrak{X}(M)}-\eta \otimes \xi $,
  \item $\varphi $ induces on the $2n$-dimensional distribution $\mathcal{D}:=\ker \eta $ an almost paracomplex structure $P$ i.e. $P^2=I_{\mathfrak{X}(M)}$ and the eigensubbundles $\mathcal{D}^+$, $\mathcal{D}^-$, corresponding to the eigenvalues $1$, $-1$ of $P$ respectively, have equal dimension $n$; hence $\mathcal{D}=\mathcal{D}^{+}\oplus \mathcal{D}^{-}$,
  \item $g(\varphi \cdot, \varphi \cdot)=-g+\eta \otimes \eta $.
  \end{enumerate}
We call $(M,\varphi , \xi , \eta , g)$ \textit{almost paracontact metric manifold}, $\varphi$ \textit{the structural endomorphism}, $\xi$ \textit{the characteristic vector field} and $\eta$ \textit{the paracontact form}.
Examples of almost paracontact metric structures are given in \cite{i:vz} and \cite{d:o}.

From the definition it follows that $\eta $ is the $g$-dual of $\xi $:
\begin{equation}
\eta (X)=g(X, \xi ),
\end{equation}
$\xi $ is a unitary vector field:
\begin{equation}
g(\xi , \xi )=1
\end{equation}
and $\varphi $ is a $g$-skew-symmetric operator:
\begin{equation}
g(\varphi X, Y)=-g(X,\varphi Y).
\end{equation}

Remark that the canonical distribution $\mathcal{D}$ is ${\varphi }$-invariant since $\mathcal{D}=Im \varphi $.
Moreover, $\xi $ is orthogonal to $\mathcal{D}$ and therefore the tangent bundle splits orthogonally:
\begin{equation}
TM=\mathcal{D}\oplus \langle\xi\rangle.
\end{equation}

An analogue of the Kenmotsu manifold \cite{ke} in paracontact geometry will be further considered.

\begin{definition}\cite{olsz}
We say that the almost paracontact metric structure $(\varphi , \xi , \eta , g)$ is para-Kenmotsu if the Levi-Civita connection $\nabla$ of $g$ satisfies $(\nabla_X \varphi)Y=g(\varphi X, Y)\xi-\eta(Y)\varphi X$, for any $X$, $Y\in \mathfrak{X}(M)$.
\end{definition}

Note that the para-Kenmotsu structure was introduced by J. We{\l}yczko in \cite{we} for $3$-dimensional normal almost paracontact metric structures. A similar notion called $P$-Kenmotsu structure appears in the paper of B. B. Sinha and K. L. Sai Prasad \cite{si}.

\medskip

We shall further give some immediate properties of this structure.

\begin{proposition}\label{p1}
On a para-Kenmotsu manifold $(M,\varphi , \xi , \eta , g)$, the following relations hold:
\begin{equation}\nabla \xi=I_{\mathfrak{X}(M)}-\eta\otimes \xi\end{equation}
\begin{equation}\eta(\nabla_X\xi)=0, \ \ \nabla_{\xi}\xi=0,\end{equation}
\begin{equation}R(X,Y)\xi=\eta(X)Y-\eta(Y)X,\end{equation}
\begin{equation}\eta(R(X,Y)Z)=-\eta(X)g(Y,Z)+\eta(Y)g(X,Z), \ \ \eta(R(X,Y)\xi)=0,\end{equation}
\begin{equation}\nabla \eta=g-\eta\otimes\eta, \ \ \nabla_{\xi} \eta=0,\end{equation}
\begin{equation}L_{\xi}\varphi=0, \ \ L_{\xi}\eta=0, \ \ L_{\xi}(\eta\otimes \eta)=0, \ \ L_{\xi}g=2(g-\eta\otimes \eta),\end{equation}
where $R$ is the Riemann curvature tensor field and $\nabla$ is the Levi-Civita connection associated to $g$.
Moreover, $\eta$ is closed, the distribution $\mathcal{D}$ is involutive and the Nijenhuis tensor field of $\varphi$ vanishes identically.
\end{proposition}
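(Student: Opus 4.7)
The plan is to derive every assertion from the defining identity $(\nabla_X \varphi)Y = g(\varphi X, Y)\xi - \eta(Y)\varphi X$ together with the algebraic consequences $\eta(X) = g(X,\xi)$, $g(\xi,\xi) = 1$, and $g(\varphi X,Y) = -g(X,\varphi Y)$ already recorded above.

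The first step is to extract $\nabla\xi$. Differentiating $\varphi\xi = 0$ gives $(\nabla_X\varphi)\xi + \varphi(\nabla_X\xi) = 0$; specializing the defining identity to $Y = \xi$ and using $\eta(\xi) = 1$ together with $g(\varphi X,\xi) = -g(X,\varphi\xi) = 0$ yields $(\nabla_X\varphi)\xi = -\varphi X$, so $\varphi(\nabla_X\xi - X) = 0$. Since $\varphi^2 = I - \eta\otimes\xi$ forces $\ker\varphi = \langle\xi\rangle$, one obtains $\nabla_X\xi = X + c(X)\xi$; differentiating $g(\xi,\xi) = 1$ gives $\eta(\nabla_X\xi) = 0$, fixing $c(X) = -\eta(X)$. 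This proves the first formula, and the two identities in the second line follow by applying $\eta$ and specializing $X = \xi$. Dualizing via $\eta(Y) = g(Y,\xi)$ then gives $(\nabla_X\eta)(Y) = g(Y,\nabla_X\xi) = g(X,Y) - \eta(X)\eta(Y)$, which is $\nabla\eta = g - \eta\otimes\eta$; symmetry of the right-hand side in $X,Y$ immediately forces $d\eta = 0$, so $\eta$ is closed.

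For the curvature, substitute $\nabla_\bullet\xi = \bullet - \eta(\bullet)\xi$ into the Ricci formula $R(X,Y)\xi = \nabla_X\nabla_Y\xi - \nabla_Y\nabla_X\xi - \nabla_{[X,Y]}\xi$: the non-$\xi$ part collapses by torsion-freeness, the $\eta$-linear contributions produce $\eta(X)Y - \eta(Y)X$, and the remaining $\xi$-components reorganize into $-2\,d\eta(X,Y)\xi$, which vanishes by the previous step. The formula for $\eta(R(X,Y)Z)$ then follows from the antisymmetry $g(R(X,Y)Z,\xi) = -g(R(X,Y)\xi,Z)$, and $\eta(R(X,Y)\xi) = 0$ is a direct substitution.

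For the Lie derivatives, $(L_\xi g)(X,Y) = g(\nabla_X\xi,Y) + g(X,\nabla_Y\xi) = 2(g - \eta\otimes\eta)(X,Y)$ after inserting $\nabla\xi$; since $\iota_\xi\eta = 1$ and $d\eta = 0$, Cartan's formula gives $L_\xi\eta = d(\iota_\xi\eta) + \iota_\xi d\eta = 0$ and hence $L_\xi(\eta\otimes\eta) = 0$. For $L_\xi\varphi$, the identity $(L_\xi\varphi)X = (\nabla_\xi\varphi)X - \nabla_{\varphi X}\xi + \varphi\nabla_X\xi$ reduces to $-\varphi X + \varphi X = 0$ once the defining identity at $X = \xi$ kills the first term. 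Finally, for the Nijenhuis tensor I would rewrite $N_\varphi(X,Y)$ in terms of covariant derivatives, reducing it to $(\nabla_{\varphi X}\varphi)Y - (\nabla_{\varphi Y}\varphi)X + \varphi(\nabla_Y\varphi)X - \varphi(\nabla_X\varphi)Y$; substituting the defining identity and using $\varphi^2 = I - \eta\otimes\xi$ collects the terms into two cancelling pairs, yielding $N_\varphi = 0$. The main bookkeeping obstacle is this last expansion, where careful tracking of the $\eta\otimes\xi$-contributions is required; conceptually nothing beyond the defining identity is needed.
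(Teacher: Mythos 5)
Your proposal is correct and follows essentially the same route as the paper: extract $\nabla_X\xi=X-\eta(X)\xi$ from the defining identity $(\nabla_X\varphi)Y=g(\varphi X,Y)\xi-\eta(Y)\varphi X$, then obtain $R(X,Y)\xi$, $\nabla\eta$, the Lie derivatives along $\xi$, $d\eta=0$ and $N_\varphi=0$ by direct substitution, ending with the same expression $[g(\varphi^2X,Y)-g(X,\varphi^2Y)]\xi=0$ for the Nijenhuis tensor. The only differences are cosmetic shortcuts (arguing via $\ker\varphi=\langle\xi\rangle$ instead of applying $\varphi$ twice, deducing $d\eta=0$ from the symmetry of $\nabla\eta$, and using Cartan's formula for $L_\xi\eta$ and the Leibniz rule for $L_\xi(\eta\otimes\eta)$), which replace the paper's direct computations but change nothing substantive.
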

\begin{proof}
Taking $Y:=\xi$ in $\nabla_X\varphi Y-\varphi(\nabla_XY)=g(\varphi X,Y)\xi-\eta(Y)\varphi X$ follows $\varphi (\nabla_X \xi)=\varphi X$ and applying $\varphi$ we obtain $\nabla_X\xi-\eta(\nabla_X\xi)\xi=X-\eta(X)\xi$. But $X(g(\xi,\xi))=2g(\nabla_X\xi,\xi)$ and so $\eta(\nabla_X\xi)=g(\nabla_X\xi,\xi)=0$. Therefore, $\nabla_X\xi=X-\eta(X)\xi$. In particular, $\nabla_{\xi}\xi=0$.

\medskip

Replacing now the expression of $\nabla \xi$ in $R(X,Y)\xi:=\nabla_X\nabla_Y\xi-\nabla_Y\nabla_X\xi-\nabla_{[X,Y]}\xi$, from a direct computation we get $R(X,Y)\xi=\eta(X)Y-\eta(Y)X$. Also $\eta(R(X,Y)Z)=g(R(X,Y)Z,\xi)=-g(R(X,Y)\xi,Z)=-[\eta(X)g(Y,Z)-\eta(Y)g(X,Z)]$. In particular, $\eta(R(X,Y)\xi)=0$.

\medskip

Compute $(\nabla_X \eta)Y:=X(\eta(Y))-\eta(\nabla_XY)=X(g(Y,\xi))-g(\nabla_XY,\xi)=g(Y,\nabla_X\xi)=g(X,Y)-\eta(X)\eta(Y)$. In particular, $(\nabla_{\xi} \eta)Y=0$.

\medskip

Express the Lie derivatives along $\xi$ as follows:
$$(L_{\xi}\varphi)(X):=[\xi,\varphi X]-\varphi([\xi,X])=\nabla_{\xi}\varphi X-\nabla_{\varphi X}\xi-\varphi(\nabla_{\xi}X)+\varphi (\nabla_X\xi)=$$$$=
\nabla_{\xi}\varphi X-\varphi(\nabla_{\xi}X):=(\nabla_{\xi}\varphi)X=0,$$
$$(L_{\xi}\eta)(X):=\xi(\eta(X))-\eta([\xi,X])=\xi(g(X,\xi))-g(\nabla_{\xi}X,\xi)+g(\nabla_X\xi,\xi)=$$$$=
g(X,\nabla_{\xi}\xi)+\eta(\nabla_X\xi)=0,$$
$$(L_{\xi}(\eta\otimes \eta))(X,Y):=\xi(\eta(X)\eta(Y))-\eta([\xi,X])\eta(Y)-\eta(X)\eta([\xi,Y])=$$$$=
\eta(X)\xi(\eta(Y))+\eta(Y)\xi(\eta(X))-\eta(\nabla_{\xi}X)\eta(Y)+\eta(\nabla_X{\xi})\eta(Y)
-\eta(X)\eta(\nabla_{\xi}Y)+\eta(X)\eta(\nabla_Y{\xi})=$$$$=\eta(X)[\xi(g(Y,\xi))-g(\nabla_{\xi}Y,\xi)]+
\eta(Y)[\xi(g(X,\xi))-g(\nabla_{\xi}X,\xi)]=$$$$=\eta(X)g(Y,\nabla_{\xi}\xi)-\eta(Y)g(X,\nabla_{\xi}\xi)=0$$
and
$$(L_{\xi}g)(X,Y):=\xi(g(X,Y))-g([\xi,X],Y)-g(X,[\xi,Y])=$$$$=\xi(g(X,Y))-g(\nabla_{\xi}X,Y)+g(\nabla_X\xi,Y)-
g(X,\nabla_{\xi}Y)+g(X,\nabla_Y\xi)=$$$$=g(\nabla_X\xi,Y)+g(X,\nabla_Y\xi)=2[g(X,Y)-\eta(X)\eta(Y)].$$

From $\nabla_X\xi=X-\eta(X)\xi$ and $\nabla_X\varphi Y-\varphi (\nabla_XY)=g(\varphi X,Y)\xi-\eta(Y)\varphi X$ we consequently obtain:
$$(d\eta)(X,Y):=X(\eta(Y))-Y(\eta(X))-\eta([X,Y])=X(g(Y,\xi))-Y(g(X,\xi))-g([X,Y],\xi)=$$$$
=X(g(Y,\xi))-g(\nabla_XY,\xi)-Y(g(X,\xi))+g(\nabla_YX,\xi)=g(Y,\nabla_X\xi)-g(X,\nabla_Y\xi)=0$$
and
$$N_{\varphi}(X,Y):=\varphi^2[X,Y]+[\varphi X,\varphi Y]-\varphi[\varphi X,Y]-\varphi[X,\varphi Y]=$$
$$=\varphi^2(\nabla_XY)-\varphi(\nabla_X\varphi Y)-\varphi^2(\nabla_YX)+\varphi(\nabla_Y\varphi X)+\nabla_{\varphi X}\varphi Y-\varphi(\nabla_{\varphi X}Y)-\nabla_{\varphi Y}\varphi X+
\varphi(\nabla_{\varphi Y}X)=$$$$=[g(\varphi^2 X,Y)-g(X,\varphi^2Y)]\xi=0.$$
\end{proof}

\bigskip

\begin{example}
Let $M=\mathbb{R}^3$ and $(x,y,z)$ be the standard coordinates in $\mathbb{R}^3$. Set
$$\varphi:=\frac{\partial}{\partial y}\otimes dx+ \frac{\partial}{\partial x}\otimes dy, \ \ \xi:=-\frac{\partial}{\partial z}, \ \ \eta:=-dz,$$
$$g:=dx\otimes dx-dy\otimes dy+dz\otimes dz.$$
Then $(\varphi , \xi , \eta , g)$ is a para-Kenmotsu structure on $\mathbb{R}^3$. Indeed, being sufficiently to verify the conditions in the definition on a linearly independent system of vector fields, consider it,
$$E_1:=\frac{\partial}{\partial x}, \ \ E_2:=\frac{\partial}{\partial y}, \ \ E_3:=-\frac{\partial}{\partial z}.$$
Follows
$$\varphi E_1=E_2, \ \ \varphi E_2=E_1, \ \ \varphi E_3=0,$$
$$\eta(E_1)=0, \ \ \eta(E_2)=0, \ \ \eta(E_3)=1,$$
$$[E_1,E_2]=0, \ \ [E_2,E_3]=0, \ \ [E_3,E_1]=0$$
and the Levi-Civita connection $\nabla$ is deduced from Koszul's formula
$$2g(\nabla_XY,Z)=X(g(Y,Z))+Y(g(Z,X))-Z(g(X,Y))-$$$$-g(X,[Y,Z])+g(Y,[Z,X])+g(Z,[X,Y]),$$
precisely,
$$\nabla_{E_1}E_1=-E_3, \ \ \nabla_{E_1}E_2=0, \ \ \nabla_{E_1}E_3=E_1,$$
$$\nabla_{E_2}E_1=0, \ \ \nabla_{E_2}E_2=E_3, \ \ \nabla_{E_2}E_3=E_2,$$
$$\nabla_{E_3}E_1=E_1, \ \ \nabla_{E_3}E_2=E_2, \ \ \nabla_{E_3}E_3=0.$$
\end{example}

\bigskip

In this setting, we shall study $\eta$-Ricci solitons for the cases: $(\xi,\cdot)_{R}\cdot S=0$, $(\xi,\cdot)_{S}\cdot R=0$, $(\xi,\cdot)_{W_2}\cdot S=0$ and $(\xi,\cdot)_{S}\cdot W_2=0$, where for $\mathcal{T}$ a $(1,3)$-type tensor, we denote by $\cdot$ the derivation of the tensor algebra at each point of the tangent space:
\begin{itemize}
  \item $((\xi,X)_{\mathcal{T}}\cdot S)(Y,Z):=((\xi\wedge_\mathcal{T}X)\cdot S)(Y,Z):=S((\xi\wedge_{\mathcal{T}}X)Y,Z)+S(Y,(\xi\wedge_{\mathcal{T}}X)Z)$, for $(X\wedge_{\mathcal{T}}Y)Z:=\mathcal{T}(X,Y)Z$;
  \item $((\xi,X)_{S}\cdot \mathcal{T})(Y,Z)W:=(\xi\wedge_SX)\mathcal{T}(Y,Z)W+\mathcal{T}((\xi\wedge_SX)Y,Z)W+\linebreak \mathcal{T}(Y,(\xi\wedge_SX)Z)W+\mathcal{T}(Y,Z)(\xi\wedge_SX)W$, for $(X\wedge_SY)Z:=S(Y,Z)X-S(X,Z)Y$.
\end{itemize}

\section{$\eta$-Ricci solitons on $(M,\varphi , \xi , \eta , g)$}

Let $(M,\varphi , \xi , \eta , g)$ be an almost paracontact metric manifold. Consider the equation
\begin{equation}\label{e8}
L_{\xi}g+2S+2\lambda g+2\mu\eta\otimes \eta=0,
\end{equation}
where $L_{\xi}$ is the Lie derivative operator along the vector field $\xi$, $S$ is the Ricci curvature tensor field of the metric $g$, and $\lambda$ and $\mu$ are real constants. Writing $L_{\xi}g$ in terms of the Levi-Civita connection $\nabla$, we obtain:
\begin{equation}\label{e9}
2S(X,Y)=
-g(\nabla_X\xi,Y)-g(X,\nabla_Y\xi)-2\lambda g(X,Y)-2\mu\eta(X)\eta(Y),
\end{equation}
for any $X$, $Y\in \mathfrak{X}(M)$.

The data $(g,\xi,\lambda,\mu)$ which satisfy the equation (\ref{e8}) is said to be an \textit{$\eta$-Ricci soliton} on $M$ \cite{ch}; in particular, if $\mu=0$, $(g,\xi,\lambda)$ is a \textit{Ricci soliton} \cite{ham} and it is called \textit{shrinking}, \textit{steady} or \textit{expanding} according as $\lambda$ is negative, zero or positive, respectively \cite{chlu}.

\bigskip

An important geometrical object in studying Ricci solitons is well-known to be a symmetric $(0,2)$-tensor field which is parallel with respect to the Levi-Civita connection, some of its geometrical properties being described in \cite{be}, \cite{cr1} etc. In the same manner as in \cite{cacr} we shall state the existence of $\eta$-Ricci solitons in our settings.

Consider now $\alpha$ such a symmetric $(0,2)$-tensor field which is parallel with respect to the Levi-Civita connection ($\nabla \alpha=0$). From the Ricci identity $\nabla^2\alpha (X,Y;Z,W)-\nabla^2\alpha (X,Y;W,Z)=0$, one obtains for any $X$, $Y$, $Z$, $W\in \mathfrak{X}(M)$ \cite{sh}
\begin{equation}
\alpha(R(X,Y)Z,W)+\alpha(Z,R(X,Y)W)=0.
\end{equation}
In particular, for $Z=W:=\xi$ from the symmetry of $\alpha$ follows $\alpha(R(X,Y)\xi,\xi)=0$, for any $X$, $Y\in \mathfrak{X}(M)$.

\bigskip

If $(\varphi , \xi , \eta , g)$ is a para-Kenmotsu structure on $M$, from Proposition \ref{p1} we have $R(X,Y)\xi=\eta(X)Y-\eta(Y)X$ and replacing this expression in $\alpha$ we get:
\begin{equation}\label{e22}
\alpha(Y,\xi)-\eta(Y)\alpha(\xi,\xi)=0,
\end{equation}
for any $Y\in \mathfrak{X}(M)$, equivalent to:
\begin{equation}\label{e20}
\alpha(Y,\xi)-\alpha(\xi,\xi)g(Y,\xi)=0,
\end{equation}
for any $Y\in \mathfrak{X}(M)$. Differentiating the equation (\ref{e20}) covariantly with respect to the vector field $X\in \mathfrak{X}(M)$ we obtain:
$$\alpha(\nabla_XY,\xi)+\alpha(Y,\nabla_X\xi)=\alpha(\xi,\xi)[g(\nabla_XY,\xi)+g(Y,\nabla_X\xi)]$$
and substituting the expression of $\nabla_X\xi=X-\eta(X)\xi$ we get:
\begin{equation}\label{e21}
\alpha(Y,X)=\alpha(\xi,\xi)g(Y,X),
\end{equation}
for any $X$, $Y\in \mathfrak{X}(M)$. As $\alpha$ is $\nabla$-parallel, follows $\alpha(\xi,\xi)$ is constant and we conclude:
\begin{proposition}
Under the hypotheses above, any parallel symmetric $(0,2)$-tensor field is a constant multiple of the metric.
\end{proposition}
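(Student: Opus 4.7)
The plan is to exploit the parallel hypothesis $\nabla\alpha=0$ twice: once through the Ricci identity, to get an algebraic constraint from the curvature, and once by direct covariant differentiation, to propagate that constraint off the characteristic direction. First I would record the Ricci identity for a parallel $(0,2)$-tensor: for all $X,Y,Z,W\in\mathfrak{X}(M)$,
\begin{equation*}
\alpha(R(X,Y)Z,W)+\alpha(Z,R(X,Y)W)=0.
\end{equation*}
Specializing to $Z=W=\xi$ and invoking the symmetry of $\alpha$ gives $\alpha(R(X,Y)\xi,\xi)=0$. This is the only place the hypothesis $\nabla\alpha=0$ enters algebraically, and it is the key pivot.

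Next I would substitute the para-Kenmotsu identity $R(X,Y)\xi=\eta(X)Y-\eta(Y)X$ from Proposition~\ref{p1}. Setting $X:=\xi$ and using $\eta(\xi)=1$ produces the pointwise relation $\alpha(Y,\xi)=\alpha(\xi,\xi)\,\eta(Y)=\alpha(\xi,\xi)\,g(Y,\xi)$. So $\alpha(\cdot,\xi)$ is already a scalar multiple of $g(\cdot,\xi)$, where the scalar is the (a priori non-constant) function $\alpha(\xi,\xi)$. To upgrade this from $\xi$-direction to arbitrary directions, I differentiate covariantly and use $\nabla\alpha=0$, $\nabla g=0$, together with the para-Kenmotsu identity $\nabla_X\xi=X-\eta(X)\xi$ from Proposition~\ref{p1}. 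Substituting this expression for $\nabla_X\xi$ into the differentiated relation lets me replace $\xi$ by an arbitrary vector field $X$ and collapses the equation to $\alpha(Y,X)=\alpha(\xi,\xi)\,g(Y,X)$.

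The final step is to check that $\alpha(\xi,\xi)$ is actually a constant. Since $\nabla\alpha=0$ and $\nabla_\xi\xi=0$, we have $X(\alpha(\xi,\xi))=2\alpha(\nabla_X\xi,\xi)=2\alpha(X,\xi)-2\eta(X)\alpha(\xi,\xi)$, and inserting the relation $\alpha(Y,\xi)=\alpha(\xi,\xi)\eta(Y)$ shows the right side vanishes. Hence $\alpha(\xi,\xi)$ is constant, and $\alpha=\alpha(\xi,\xi)\,g$. The only step that needs any care is the covariant differentiation in paragraph two, where one must cleanly cancel the $\eta$-terms after inserting $\nabla_X\xi=X-\eta(X)\xi$; everything else is straightforward substitution into formulas supplied by Proposition~\ref{p1}.
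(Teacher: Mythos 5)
Your proposal is correct and follows essentially the same route as the paper: Ricci identity for the parallel tensor, specialization to $Z=W=\xi$, substitution of $R(X,Y)\xi=\eta(X)Y-\eta(Y)X$ and $\nabla_X\xi=X-\eta(X)\xi$, then covariant differentiation to obtain $\alpha=\alpha(\xi,\xi)\,g$ with $\alpha(\xi,\xi)$ constant. Your explicit check that $X(\alpha(\xi,\xi))=2\alpha(\nabla_X\xi,\xi)=0$ is a small refinement the paper leaves implicit, but the argument is the same.
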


Because on a para-Kenmotsu manifold $(M,\varphi, \xi, \eta,g)$, $\nabla_X \xi=X-\eta(X)\xi$ and \linebreak $L_{\xi}g=2(g-\eta\otimes \eta)$, the equation (\ref{e9}) becomes:
\begin{equation}\label{e1}
S(X,Y)=-(\lambda+1)g(X,Y)-(\mu-1)\eta(X)\eta(Y).
\end{equation}
In particular, $S(X,\xi)=S(\xi,X)=-(\lambda+\mu)\eta(X)$. But it is known \cite{s:z} that on a $(2n+1)$-dimensional paracontact manifold $M$, $S(X,\xi)=-(\dim (M)-1)\eta(X)=-2n\eta(X)$, so:
\begin{equation}
\lambda+\mu=2n.
\end{equation}

In this case, the Ricci operator $Q$ defined by $g(QX,Y):=S(X,Y)$ has the expression:
\begin{equation}\label{e3}
QX=-(2n+1-\mu)X-(\mu-1)\eta(X)\xi.
\end{equation}

\bigskip

Now we shall apply the previous results to $\eta$-Ricci solitons.

\begin{theorem}
Let $(M,\varphi,\xi,\eta,g)$ be a para-Kenmotsu manifold.
Assume that the symmetric $(0,2)$-tensor field $\alpha:=L_{\xi}g+2S+2\mu \eta\otimes \eta$ is parallel with respect to the Levi-Civita connection associated to $g$. Then $(g,\xi,\mu)$ yields an $\eta$-Ricci soliton.
\end{theorem}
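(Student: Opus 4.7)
The plan is to reduce the statement to the preceding proposition on parallel symmetric $(0,2)$-tensor fields. Since $\alpha$ is assumed symmetric and $\nabla$-parallel, that proposition immediately yields $\alpha = c\, g$ where $c = \alpha(\xi,\xi)$ is a (real) constant. The task then reduces to identifying this constant and rewriting the identity $\alpha = c\,g$ in the form of the $\eta$-Ricci soliton equation (\ref{e8}) with a suitable $\lambda$.

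The first concrete step is to evaluate $\alpha(\xi,\xi)$. From Proposition \ref{p1} we know $L_{\xi}g = 2(g - \eta\otimes\eta)$, so $(L_{\xi}g)(\xi,\xi) = 2(g(\xi,\xi) - \eta(\xi)^{2}) = 0$. Combined with the standard paracontact identity $S(\xi,\xi) = -2n$ (which is recalled in the paragraph preceding the theorem) and $(\eta\otimes\eta)(\xi,\xi) = 1$, I obtain
\[
\alpha(\xi,\xi) \;=\; 0 + 2(-2n) + 2\mu \;=\; -2(2n - \mu).
\]

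The second step is to substitute back. By the proposition, $\alpha = \alpha(\xi,\xi)\, g = -2(2n-\mu)\, g$ everywhere, that is,
\[
L_{\xi}g + 2S + 2\mu\,\eta\otimes\eta \;=\; -2(2n-\mu)\, g.
\]
Rearranging and setting $\lambda := 2n - \mu$, this is precisely
\[
L_{\xi}g + 2S + 2\lambda g + 2\mu\,\eta\otimes\eta \;=\; 0,
\]
which is equation (\ref{e8}). Hence $(g,\xi,\lambda,\mu)$ with $\lambda+\mu=2n$ is an $\eta$-Ricci soliton on $M$, as required.

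No real obstacle is expected: the heavy lifting is already done by the preceding proposition, and the remaining work is the single trace-type evaluation $\alpha(\xi,\xi)$ using the para-Kenmotsu identities. The only point requiring a bit of care is making sure $\alpha$ genuinely satisfies the hypotheses of the preceding proposition, namely symmetry (clear, since $L_{\xi}g$, $S$, and $\eta\otimes\eta$ are all symmetric) and parallelism (assumed).
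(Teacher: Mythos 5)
Your proof is correct and follows essentially the same route as the paper: apply the parallel-tensor proposition to write $\alpha=\alpha(\xi,\xi)\,g$, evaluate $\alpha(\xi,\xi)$, and read off equation (\ref{e8}). The only (harmless) difference is that you compute the constant explicitly as $\alpha(\xi,\xi)=-2(2n-\mu)$ via $S(\xi,\xi)=-2n$, hence $\lambda=2n-\mu$, whereas the paper simply sets $\lambda:=-\tfrac12\alpha(\xi,\xi)$.
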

\begin{proof}
Compute
$$\alpha(\xi,\xi)=(L_{\xi}g)(\xi,\xi)+2S(\xi,\xi)+2\mu \eta(\xi)\eta(\xi)=-2\lambda,$$
so $\lambda=-\frac{\displaystyle 1}{\displaystyle 2}\alpha(\xi,\xi)$.
From (\ref{e21}) we get $\alpha(X,Y)=-2\lambda g(X,Y)$, for any $X$, $Y\in \mathfrak{X}(M)$. Therefore,
$L_{\xi}g+2S+2\mu \eta\otimes \eta=-2\lambda g$.
\end{proof}

For $\mu=0$ follows $L_{\xi}g+2S+4ng=0$ and we conclude:

\begin{corollary}
On a para-Kenmotsu manifold $(M,\varphi,\xi,\eta,g)$ with the property that the symmetric $(0,2)$-tensor field $\alpha:=L_{\xi}g+2S$ is parallel with respect to the Levi-Civita connection associated to $g$, the relation (\ref{e8}), for $\mu=0$ and $\lambda=2n$, defines a Ricci soliton on $M$.
\end{corollary}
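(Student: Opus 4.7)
The plan is to deduce this directly from the preceding Theorem by specializing $\mu=0$ and then pinning down the value of $\lambda$. The Theorem asserts that whenever $\alpha:=L_{\xi}g+2S+2\mu\eta\otimes\eta$ is $\nabla$-parallel, the data $(g,\xi,\mu)$ with $\lambda=-\tfrac{1}{2}\alpha(\xi,\xi)$ form an $\eta$-Ricci soliton. The Corollary's hypothesis is exactly the $\mu=0$ instance of this, so the conclusion that (\ref{e8}) holds is immediate once the correct constant $\lambda$ is identified; recall that an $\eta$-Ricci soliton with $\mu=0$ is by definition a Ricci soliton, so it remains only to verify $\lambda=2n$.

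To pin down $\lambda$, I would evaluate $\alpha(\xi,\xi)$ using the formulas collected in Proposition \ref{p1}. From $L_{\xi}g=2(g-\eta\otimes\eta)$ and $\eta(\xi)=1$ we get $(L_{\xi}g)(\xi,\xi)=2(1-1)=0$. For the Ricci term, the general identity $S(X,\xi)=-2n\,\eta(X)$ valid on any $(2n+1)$-dimensional paracontact manifold (recalled in the paragraph preceding the Theorem) gives $S(\xi,\xi)=-2n$. Hence $\alpha(\xi,\xi)=0+2(-2n)=-4n$, and the Theorem yields $\lambda=-\tfrac{1}{2}(-4n)=2n$. Equivalently, one can simply invoke the relation $\lambda+\mu=2n$ established earlier and substitute $\mu=0$.

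Combining these observations, the parallel tensor equation produced by the Theorem reads
\begin{equation*}
L_{\xi}g+2S+4n\,g=0,
\end{equation*}
which is precisely (\ref{e8}) with $\mu=0$ and $\lambda=2n$, proving that $(g,\xi,2n)$ is a Ricci soliton on $M$. There is no real obstacle in the argument; the only point requiring any care is making sure the value of $\alpha(\xi,\xi)$ is computed consistently using the para-Kenmotsu identities from Proposition \ref{p1}, so that the specialization $\mu=0$ forces $\lambda=2n$ rather than any other constant.
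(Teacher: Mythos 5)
Your proposal is correct and follows essentially the same route as the paper: apply the preceding Theorem with $\mu=0$, compute $\alpha(\xi,\xi)=2S(\xi,\xi)=-4n$ using $(L_{\xi}g)(\xi,\xi)=0$ and $S(\xi,\xi)=-2n$, and conclude $\lambda=2n$, giving $L_{\xi}g+2S+4ng=0$. The only minor caveat is that invoking $\lambda+\mu=2n$ as an ``equivalent'' shortcut is slightly circular (that relation was derived assuming the soliton equation already holds), but your primary computation of $\alpha(\xi,\xi)$ avoids this issue.
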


Conversely, we shall study the consequences of the existence of $\eta$-Ricci solitons on a para-Kenmotsu manifold. From (\ref{e1}) we deduce:

\begin{proposition}
If (\ref{e8}) defines an $\eta$-Ricci soliton on the para-Kenmotsu manifold \linebreak $(M, \varphi , \xi , \eta , g)$, then $(M,g)$ is quasi-Einstein.
\end{proposition}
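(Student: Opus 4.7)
The plan is to read off the conclusion directly from equation (\ref{e1}), which has already been derived in the paragraph preceding the proposition. So essentially no new computation is required beyond packaging it as a quasi-Einstein condition.

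More precisely, I would start from the $\eta$-Ricci soliton equation (\ref{e8}) and use the identity $L_{\xi}g=2(g-\eta\otimes\eta)$ from Proposition \ref{p1} to rewrite the Lie-derivative term. Substituting this into (\ref{e8}) and solving for $S$ gives
\[
S(X,Y)=-(\lambda+1)g(X,Y)-(\mu-1)\eta(X)\eta(Y),
\]
which is exactly (\ref{e1}). Hence $S$ is a linear combination of $g$ and $\eta\otimes\eta$ with constant coefficients $a=-(\lambda+1)$ and $b=-(\mu-1)$, where $\eta$ is the $g$-dual 1-form of the unit vector field $\xi$. This is by definition the statement that $(M,g)$ is a quasi-Einstein manifold.

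The only thing one might want to remark on is the degenerate case $\mu=1$, in which $b=0$ and the manifold is actually Einstein. Quasi-Einstein is usually phrased so as to include this Einstein case as a limit, so no separate treatment is needed; otherwise one notes it as a trivial subcase. There is no real obstacle in this proof, the content is entirely contained in the preceding derivation of (\ref{e1}) from Proposition \ref{p1}, and the proposition amounts to pointing out what form this equation takes.
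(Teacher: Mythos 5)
Your proposal is correct and coincides with the paper's own argument: the paper also obtains (\ref{e1}) by substituting $L_{\xi}g=2(g-\eta\otimes\eta)$ into the soliton equation and then reads off the quasi-Einstein form directly. Your remark on the degenerate case $\mu=1$ (where the manifold is in fact Einstein) is a reasonable addition that the paper leaves implicit.
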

Recall that the manifold is called \textit{quasi-Einstein} if the Ricci curvature tensor field $S$ is a linear combination (with real scalars $\lambda$ and $\mu$ respectively, with $\mu\neq 0$) of $g$ and the tensor product of a non-zero $1$-form $\eta$ satisfying $\eta(X)=g(X,\xi)$, for $\xi$ a unit vector field \cite{mai} and respectively, \textit{Einstein} if $S$ is collinear with $g$.

\begin{proposition}
If $(\varphi , \xi , \eta , g)$ is a para-Kenmotsu structure on $M$ and (\ref{e8}) defines an $\eta$-Ricci soliton on $M$, then:
\begin{enumerate}
  \item $Q\circ \varphi=\varphi \circ Q$;
  \item $Q$ and $S$ are parallel along $\xi$.
\end{enumerate}
\end{proposition}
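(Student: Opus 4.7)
The plan is to rely on the explicit form of the Ricci operator $Q$ derived just before the proposition, namely
$$QX=-(2n+1-\mu)X-(\mu-1)\eta(X)\xi,$$
together with the identities $\varphi\xi=0$, $\eta\circ\varphi=0$, $\nabla_\xi\xi=0$ and $\nabla_\xi\eta=0$ collected in Proposition \ref{p1}. Both assertions then reduce to short, direct computations; the only thing to keep straight is which $\eta$-terms vanish against which $\varphi$- or $\xi$-terms.

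For part (1) I would apply $Q$ to $\varphi X$ and $\varphi$ to $QX$ separately. Using $QX=-(2n+1-\mu)X-(\mu-1)\eta(X)\xi$, the first gives
$$Q(\varphi X)=-(2n+1-\mu)\varphi X-(\mu-1)\eta(\varphi X)\xi=-(2n+1-\mu)\varphi X$$
because $\eta\circ\varphi=0$, while
$$\varphi(QX)=-(2n+1-\mu)\varphi X-(\mu-1)\eta(X)\varphi\xi=-(2n+1-\mu)\varphi X$$
because $\varphi\xi=0$. Equality is then immediate.

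For part (2) I would compute $(\nabla_\xi Q)X=\nabla_\xi(QX)-Q(\nabla_\xi X)$ by substituting the formula for $Q$ in both summands. Expanding $\nabla_\xi(QX)$ produces the terms $(\mu-1)(\nabla_\xi\eta)(X)\xi$ and $(\mu-1)\eta(X)\nabla_\xi\xi$, both of which vanish by Proposition \ref{p1}. What remains is exactly $Q(\nabla_\xi X)$, hence $\nabla_\xi Q=0$. An analogous expansion of $(\nabla_\xi S)(X,Y)$ using $S(X,Y)=-(\lambda+1)g(X,Y)-(\mu-1)\eta(X)\eta(Y)$, and the identities $\nabla g=0$ and $\nabla_\xi\eta=0$, gives $\nabla_\xi S=0$ directly.

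There is no real obstacle: once the closed-form expressions (\ref{e1}) and (\ref{e3}) are in hand, every step is a one-line substitution of $\varphi\xi=0$, $\eta\circ\varphi=0$, $\nabla_\xi\xi=0$ or $\nabla_\xi\eta=0$. The mild bookkeeping point is to notice that the two assertions of the proposition are in fact equivalent (since $g(QX,Y)=S(X,Y)$ and $\nabla g=0$), so verifying $\nabla_\xi Q=0$ suffices for the Ricci tensor as well.
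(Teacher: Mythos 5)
Your proposal is correct and follows exactly the paper's route: substitute the closed forms (\ref{e3}) and (\ref{e1}) for $Q$ and $S$ into the definitions and use $\varphi\xi=0$, $\eta\circ\varphi=0$, $\nabla_{\xi}\xi=0$, $\nabla_{\xi}\eta=0$ from Proposition \ref{p1}. The paper states this as a direct computation without detail, so your write-up (including the observation that $\nabla_{\xi}Q=0$ already implies $\nabla_{\xi}S=0$) is simply a fuller version of the same argument.
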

\begin{proof}
The first statement follows from a direct computation and for the second one, note that $(\nabla_{\xi}Q)X:=\nabla_{\xi}QX-Q(\nabla_{\xi}X)$ and $(\nabla_{\xi}S)(X,Y):=\xi(S(X,Y))-S(\nabla_{\xi}X,Y)-S(X,\nabla_{\xi}Y)$ and replace $Q$ and $S$ from (\ref{e3}) and (\ref{e1}).
\end{proof}

A particular case arise when the manifold is \textit{$\varphi$-Ricci symmetric}, which means that $\varphi^2 \circ \nabla Q=0$, fact stated in the next proposition:
\begin{proposition}
Let $(M,\varphi , \xi , \eta , g)$ be a para-Kenmotsu manifold. If $M$ is $\varphi$-Ricci symmetric and (\ref{e8}) defines an $\eta$-Ricci soliton on $M$, then $\mu=1$, $\lambda=2n-1$ and $(M,g)$ is Einstein manifold.
\end{proposition}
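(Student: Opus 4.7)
The plan is to exploit the explicit expression $QX=-(2n+1-\mu)X-(\mu-1)\eta(X)\xi$ from (\ref{e3}) together with the $\varphi$-Ricci symmetry condition $\varphi^{2}\circ\nabla Q=0$. First I would compute $(\nabla_X Q)Y=\nabla_X(QY)-Q(\nabla_X Y)$; the $-(2n+1-\mu)\nabla_X Y$ pieces cancel, leaving only $-(\mu-1)\bigl[(\nabla_X\eta)(Y)\xi+\eta(Y)\nabla_X\xi\bigr]$, which I would simplify using $(\nabla_X\eta)(Y)=g(X,Y)-\eta(X)\eta(Y)$ and $\nabla_X\xi=X-\eta(X)\xi$ from Proposition~\ref{p1}. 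The outcome is $-(\mu-1)\bigl[g(X,Y)\xi+\eta(Y)X-2\eta(X)\eta(Y)\xi\bigr]$.

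Next I would apply $\varphi^{2}=I-\eta\otimes\xi$. Since $\varphi\xi=0$, hence $\varphi^{2}\xi=0$, every term in $(\nabla_X Q)Y$ that carries $\xi$ as a factor is annihilated, and the $\varphi$-Ricci symmetry collapses to the identity $(\mu-1)\eta(Y)\bigl(X-\eta(X)\xi\bigr)=0$ for all $X,Y\in\mathfrak{X}(M)$. Taking $Y=\xi$ and any nonzero $X\in\ker\eta$ forces $\mu=1$, and the scalar relation $\lambda+\mu=2n$ already derived from $S(X,\xi)=-2n\,\eta(X)$ then yields $\lambda=2n-1$.

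Finally, substituting $\mu=1$ into (\ref{e1}) eliminates the $\eta\otimes\eta$ contribution and leaves $S=-2n\,g$, which is precisely the Einstein condition. The only bookkeeping obstacle is applying the Leibniz rule correctly when differentiating $\eta(Y)\xi$ so that both the $\nabla\eta$ and $\nabla\xi$ contributions appear; after that, the collapse under $\varphi^{2}$ is immediate and no curvature identity beyond Proposition~\ref{p1} is needed.
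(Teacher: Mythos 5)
Your proposal is correct and follows essentially the same route as the paper: substitute the explicit Ricci operator from (\ref{e3}) into $(\nabla_XQ)Y=\nabla_XQY-Q(\nabla_XY)$, apply $\varphi^2=I-\eta\otimes\xi$ to kill the $\xi$-terms and obtain $(\mu-1)\eta(Y)\bigl[X-\eta(X)\xi\bigr]=0$, then conclude $\mu=1$, $\lambda=2n-1$ via $\lambda+\mu=2n$, and $S=-2ng$ from (\ref{e1}). You merely spell out the Leibniz-rule computation and the choice $Y=\xi$, $X\in\ker\eta$ that the paper leaves implicit.
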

\begin{proof}
Replacing $Q$ from (\ref{e3}) in $(\nabla_XQ)Y:=\nabla_XQY-Q(\nabla_XY)$ and applying $\varphi^2$ we obtain:
$$(\mu-1)\eta(Y)[X-\eta(X)\xi]=0,$$
for any $X$, $Y\in \mathfrak{X}(M)$. Follows $\mu=1$, $\lambda=2n-1$ and $S=-2ng$.
\end{proof}

In particular, the existence of an $\eta$-Ricci soliton on a para-Kenmotsu manifold which is \textit{Ricci symmetric} (i.e. $\nabla S=0$) implies that $(M,g)$ is Einstein manifold. Remark that the class of Ricci symmetric manifolds represents an extension of the class of Einstein manifolds to which belong also the locally symmetric manifolds (i.e. those satisfying $\nabla R=0$). The condition $\nabla S=0$ implies $R\cdot S=0$ and the manifolds satisfying this condition are called Ricci semisymmetric \cite{n}.

\bigskip

We end these considerations by giving an example of $\eta$-Ricci soliton on a para-Kenmotsu manifold.

\bigskip

\begin{example}
Let $M=\mathbb{R}^3$ and $(x,y,z)$ be the standard coordinates in $\mathbb{R}^3$. Set
$$\varphi:=\frac{\partial}{\partial y}\otimes dx+\frac{\partial}{\partial x}\otimes dy, \ \ \xi:=-\frac{\partial}{\partial z}, \ \ \eta:=-dz,$$
$$g:=dx\otimes dx-dy\otimes dy+dz\otimes dz$$
and consider the linearly independent system of vector fields
$$E_1:=\frac{\partial}{\partial x}, \ \ E_2:=\frac{\partial}{\partial y}, \ \ E_3:=-\frac{\partial}{\partial z}.$$
Follows
$$\nabla_{E_1}E_1=-E_3, \ \ \nabla_{E_1}E_2=0, \ \ \nabla_{E_1}E_3=E_1, \ \ \nabla_{E_2}E_1=0, \ \ \nabla_{E_2}E_2=E_3,$$$$\nabla_{E_2}E_3=E_2, \ \ \nabla_{E_3}E_1=E_1, \ \ \nabla_{E_3}E_2=E_2, \ \ \nabla_{E_3}E_3=0.$$

Then the Riemann and the Ricci curvature tensor fields are given by:
$$R(E_1,E_2)E_2=E_1, \ \ R(E_1,E_3)E_3=-E_1, \ \ R(E_2,E_1)E_1=-E_2,$$ $$R(E_2,E_3)E_3=-E_2, \ \ R(E_3,E_1)E_1=E_3, \ \ R(E_3,E_2)E_2=-E_3,$$
$$S(E_1,E_1)=0, \ \ S(E_2,E_2)=0, \ \ S(E_3,E_3)=-2.$$

In this case, from (\ref{e1}), for $\lambda=-1$ and $\mu=3$, the data $(g,\xi,\lambda,\mu)$ is an $\eta$-Ricci soliton on $(\mathbb{R}^3, \varphi , \xi , \eta , g)$.
\end{example}

\bigskip

In what follows we shall consider $\eta$-Ricci solitons requiring for the curvature to satisfy $(\xi,\cdot)_{R}\cdot S=0$, $(\xi,\cdot)_{S}\cdot R=0$, $(\xi,\cdot)_{W_2}\cdot S=0$ and $(\xi,\cdot)_{S}\cdot W_2=0$, respectively, where the $W_2$-curvature tensor field is the curvature tensor introduced by G. P Pokhariyal and R. S. Mishra in \cite{po}:
\begin{equation}
W_2(X,Y)Z:=R(X,Y)Z+\frac{1}{\dim (M)-1}[g(X,Z)QY-g(Y,Z)QX]=$$$$=R(X,Y)Z+\frac{1}{2n}[g(X,Z)QY-g(Y,Z)QX].
\end{equation}

\subsection{$\eta$-Ricci solitons on para-Kenmotsu manifolds \\
satisfying $(\xi,\cdot)_{R}\cdot S=0$}

The condition that must be satisfied by $S$ is:
\begin{equation}
S(R(\xi,X)Y,Z)+S(Y,R(\xi,X)Z)=0,
\end{equation}
for any $X$, $Y$, $Z\in \mathfrak{X}(M)$.

\bigskip

Replacing the expression of $S$ from (\ref{e1}) and from the symmetries of $R$ we get:
\begin{equation}
(\mu-1)[\eta(Y)g(X,Z)+\eta(Z)g(X,Y)-2\eta(X)\eta(Y)\eta(Z)]=0,
\end{equation}
for any $X$, $Y$, $Z\in \mathfrak{X}(M)$.

For $Z:=\xi$ we have:
\begin{equation}
(\mu-1)g(\varphi X,\varphi Y)=0,
\end{equation}
for any $X$, $Y\in \mathfrak{X}(M)$. But $\lambda+\mu=2n$ and we can state:

\begin{theorem}\label{p}
If $(\varphi,\xi,\eta,g)$ is a para-Kenmotsu structure on the $(2n+1)$-dimensional manifold $M$, $(g,\xi,\lambda,\mu)$ is an $\eta$-Ricci soliton on $M$ and $(\xi,\cdot)_{R}\cdot S=0$, then $\mu=1$ and $\lambda=2n-1$, so $(M,g)$ is Einstein manifold.
\end{theorem}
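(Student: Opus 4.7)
The plan is to unpack the semi-symmetry-type hypothesis $R(\xi,X)\cdot S=0$ as the algebraic identity
\[
S(R(\xi,X)Y,Z)+S(Y,R(\xi,X)Z)=0,
\]
then substitute two things: the explicit formula for $R(\xi,X)$ coming from Proposition \ref{p1}, and the $\eta$-Ricci soliton expression (\ref{e1}) for $S$. First I would derive $R(\xi,X)Y=\eta(Y)X-g(X,Y)\xi$ from $R(X,Y)\xi=\eta(X)Y-\eta(Y)X$ via the symmetry $g(R(X,Y)Z,W)=g(R(Z,W)X,Y)$ of the Riemann tensor.

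Substituting this into the semi-symmetry condition and using $S(\xi,X)=-2n\eta(X)$ together with (\ref{e1}), the terms $-(\lambda+1)g(\cdot,\cdot)$ combine with the $2n$ coming from $S(\cdot,\xi)$; since $\lambda+\mu=2n$ we have $-(\lambda+1)+2n=\mu-1$, and everything collapses to
\[
(\mu-1)\bigl[\eta(Y)g(X,Z)+\eta(Z)g(X,Y)-2\eta(X)\eta(Y)\eta(Z)\bigr]=0,
\]
for all $X,Y,Z$. Specializing $Z:=\xi$ and using $g(X,\xi)=\eta(X)$ and condition (4) of the almost paracontact structure, the bracket becomes $g(X,Y)-\eta(X)\eta(Y)=-g(\varphi X,\varphi Y)$, giving
\[
(\mu-1)\,g(\varphi X,\varphi Y)=0
\]
for all $X,Y\in\mathfrak{X}(M)$.

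To conclude I would observe that $\varphi$ does not vanish on the distribution $\mathcal D=\ker\eta$ (since $\varphi$ restricts to an almost paracomplex structure there), so $g(\varphi X,\varphi Y)$ is not identically zero; hence $\mu=1$. Combined with $\lambda+\mu=2n$ this yields $\lambda=2n-1$, and feeding $\mu=1$ back into (\ref{e1}) gives $S=-2n\,g$, so $(M,g)$ is Einstein.

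The only subtle point is the derivation of $R(\xi,X)Y$ from the known expression of $R(X,Y)\xi$; after that, everything is a bookkeeping exercise with the soliton equation and the identity $\lambda+\mu=2n$. I do not anticipate a real obstacle beyond making sure the sign conventions in property (4) of the almost paracontact structure are respected when rewriting $g(X,Y)-\eta(X)\eta(Y)$ in terms of $g(\varphi X,\varphi Y)$.
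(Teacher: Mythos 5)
Your proposal is correct and follows essentially the same route as the paper: expand $R(\xi,X)\cdot S=0$, substitute $R(\xi,X)Y=\eta(Y)X-g(X,Y)\xi$ (which the paper subsumes under ``the symmetries of $R$'') and $S$ from (\ref{e1}), reduce via $\lambda+\mu=2n$ to $(\mu-1)[\eta(Y)g(X,Z)+\eta(Z)g(X,Y)-2\eta(X)\eta(Y)\eta(Z)]=0$, and set $Z=\xi$ to get $(\mu-1)g(\varphi X,\varphi Y)=0$, whence $\mu=1$, $\lambda=2n-1$ and $S=-2ng$. Your added justification that $g(\varphi\cdot,\varphi\cdot)=-g+\eta\otimes\eta$ is not identically zero is a point the paper leaves implicit, but it is correct and the two arguments are otherwise identical.
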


\begin{corollary}
On a para-Kenmotsu manifold $(M,\varphi,\xi,\eta,g)$ satisfying $(\xi,\cdot)_{R}\cdot S=0$, there is no Ricci soliton with the potential vector field $\xi$.
\end{corollary}

\subsection{$\eta$-Ricci solitons on para-Kenmotsu manifolds \\
satisfying $(\xi,\cdot)_{S}\cdot R=0$}

The condition that must be satisfied by $S$ is:

\begin{equation}\label{e777}
S(X,R(Y,Z)W)\xi-S(\xi,R(Y,Z)W)X+S(X,Y)R(\xi,Z)W-$$
$$-S(\xi,Y)R(X,Z)W+S(X,Z)R(Y,\xi)W-S(\xi,Z)R(Y,X)W+$$
$$+S(X,W)R(Y,Z)\xi-S(\xi,W)R(Y,Z)X=0,
\end{equation}
for any $X$, $Y$, $Z$, $W\in \mathfrak{X}(M)$.

\bigskip

Taking the inner product with $\xi$, the relation (\ref{e777}) becomes:
\begin{equation}
S(X,R(Y,Z)W)-S(\xi,R(Y,Z)W)\eta(X)+$$$$+S(X,Y)\eta(R(\xi,Z)W)-S(\xi,Y)\eta(R(X,Z)W)+
S(X,Z)\eta(R(Y,\xi)W)-$$$$-S(\xi,Z)\eta(R(Y,X)W)+S(X,W)\eta(R(Y,Z)\xi)-S(\xi,W)\eta(R(Y,Z)X)=0,
\end{equation}
for any $X$, $Y$, $Z$, $W\in \mathfrak{X}(M)$.

Replacing the expression of $S$ from (\ref{e1}), we get:
\begin{equation}
(\lambda+1)[g(X,R(Y,Z)W)-2\eta(X)\eta(Z)g(Y,W)+2\eta(X)\eta(Y)g(Z,W)-$$$$-g(X,Y)g(Z,W)+g(X,Z)g(Y,W)]+$$$$+
(\mu-1)[\eta(Y)\eta(W)g(X,Z)-\eta(Z)\eta(W)g(X,Y)]=0,
\end{equation}
for any $X$, $Y$, $Z$, $W\in \mathfrak{X}(M)$.

For $W:=\xi$ we have:
\begin{equation}
(2\lambda+\mu+1)[\eta(Y)g(X,Z)-\eta(Z)g(X,Y)]=0,
\end{equation}
for any $X$, $Y$, $Z\in \mathfrak{X}(M)$, which is equivalent to
\begin{equation}
(2\lambda+\mu+1)g(X,R(Y,Z)\xi)=0,
\end{equation}
for any $X$, $Y$, $Z\in \mathfrak{X}(M)$. But $\lambda+\mu=2n$, so $4n+1-\mu=0$ and we can state:

\begin{theorem}
If $(\varphi,\xi,\eta,g)$ is a para-Kenmotsu structure on the $(2n+1)$-dimensional manifold $M$, $(g,\xi,\lambda,\mu)$ is an $\eta$-Ricci soliton on $M$ and $(\xi,\cdot)_{S}\cdot R=0$, then $\mu=4n+1$ and $\lambda=-2n-1$.
\end{theorem}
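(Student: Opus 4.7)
The plan is to interpret the hypothesis $S(\xi,X)\cdot R=0$ in the usual derivation sense, i.e. letting the $(0,2)$-tensor act on each of the four slots of $R$, and then to extract a scalar identity between $\lambda$ and $\mu$ by specializing the free arguments.

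First I would write out the condition as in (\ref{e777}):
\[
S(X,R(Y,Z)W)\xi-S(\xi,R(Y,Z)W)X+S(X,Y)R(\xi,Z)W-S(\xi,Y)R(X,Z)W+\cdots=0,
\]
and then take the inner product with $\xi$. This replaces every factor of the form $g(R(\cdots)\cdots,\xi)$ by an $\eta$-contraction, which by Proposition \ref{p1} simplifies via $\eta(R(X,Y)Z)=-\eta(X)g(Y,Z)+\eta(Y)g(X,Z)$ and $\eta(R(X,Y)\xi)=0$. At this point I would substitute the soliton expression $S(X,Y)=-(\lambda+1)g(X,Y)-(\mu-1)\eta(X)\eta(Y)$ from (\ref{e1}) and use $S(X,\xi)=-2n\eta(X)$ together with $\eta(\xi)=1$ to collapse the numerous $\eta$-contractions. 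After grouping, I expect to land on an identity of the shape
\[
(\lambda+1)\bigl[g(X,R(Y,Z)W)+\text{curvature-free terms}\bigr]+(\mu-1)\bigl[\eta(Y)\eta(W)g(X,Z)-\eta(Z)\eta(W)g(X,Y)\bigr]=0,
\]
exactly as displayed in the excerpt just before the theorem.

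The next step is to set $W:=\xi$. Using once more $R(Y,Z)\xi=\eta(Y)Z-\eta(Z)Y$ and the $\eta$-contraction formula to simplify the remaining curvature terms, all the $\eta(X)$-type contributions cancel and I should be left with
\[
(2\lambda+\mu+1)\bigl[\eta(Y)g(X,Z)-\eta(Z)g(X,Y)\bigr]=0
\]
for every $X,Y,Z\in \mathfrak{X}(M)$. Choosing, for instance, $Y:=\xi$ and $X=Z\in\mathcal{D}$ with $g(X,X)\neq 0$ shows that the bracket is not identically zero, so the scalar factor must vanish: $2\lambda+\mu+1=0$. Combined with the trace identity $\lambda+\mu=2n$ (already established right after (\ref{e1}) by evaluating $S(\cdot,\xi)$), this linear system gives $\lambda=-2n-1$ and $\mu=4n+1$.

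The main obstacle is purely organizational rather than conceptual: equation (\ref{e777}) has eight terms, each of which splits into two after inserting (\ref{e1}), so one must track signs carefully and verify that only the $(2\lambda+\mu+1)$-coefficient survives after the $W:=\xi$ substitution. All the geometric ingredients needed — $R(X,Y)\xi$, $\eta\circ R$, $S(\cdot,\xi)=-2n\,\eta$, and $\lambda+\mu=2n$ — are already in place from Proposition \ref{p1} and the opening computations of Section~3, so no new structural input is required.
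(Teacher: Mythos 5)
Your proposal follows essentially the same route as the paper: expand the derivation condition as in (\ref{e777}), contract with $\xi$, substitute (\ref{e1}), set $W:=\xi$ to isolate the factor $(2\lambda+\mu+1)$, and combine with $\lambda+\mu=2n$ to get $\mu=4n+1$, $\lambda=-2n-1$. Your explicit check that $\eta(Y)g(X,Z)-\eta(Z)g(X,Y)$ is not identically zero (via $Y:=\xi$, $X=Z\in\mathcal{D}$ non-null) is a small justification the paper leaves implicit, but otherwise the argument coincides with the paper's proof and is correct.
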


\begin{corollary}
On a para-Kenmotsu manifold $(M,\varphi, \xi,\eta,g)$ satisfying \linebreak $(\xi,\cdot)_{S}\cdot R=0$, there is no Ricci soliton with the potential vector field $\xi$.
\end{corollary}

\subsection{$\eta$-Ricci solitons on para-Kenmotsu manifolds \\
satisfying $(\xi,\cdot)_{W_2}\cdot S=0$}

The condition that must be satisfied by $S$ is:
\begin{equation}
S(W_2(\xi,X)Y,Z)+S(Y,W_2(\xi,X)Z)=0,
\end{equation}
for any $X$, $Y$, $Z\in \mathfrak{X}(M)$.

\bigskip

Replacing the expression of $S$ from (\ref{e1}) we get:
\begin{equation}
\frac{(\mu-1)(2\lambda+\mu+1-2n)}{2n}[\eta(Y)g(X,Z)+\eta(Z)g(X,Y)-2\eta(X)\eta(Y)\eta(Z)]=0,
\end{equation}
for any $X$, $Y$, $Z\in \mathfrak{X}(M)$.

For $Z:=\xi$ we have:
\begin{equation}
(\mu-1)(2\lambda+\mu+1-2n)g(\varphi X,\varphi Y)=0,
\end{equation}
for any $X$, $Y\in \mathfrak{X}(M)$. But $\lambda+\mu=2n$, so $(\mu-1)(2n+1-\mu)=0$ and we can state:

\begin{theorem}\label{pro1}
If $(\varphi,\xi,\eta,g)$ is a para-Kenmotsu structure on the $(2n+1)$-dimensional manifold $M$, $(g,\xi,\lambda,\mu)$ is an $\eta$-Ricci soliton on $M$ and $(\xi,\cdot)_{W_2}\cdot S=0$, then $\mu=1$ and $\lambda=2n-1$ or $\mu=2n+1$ and $\lambda=-1$.
\end{theorem}

\begin{corollary}
On a para-Kenmotsu manifold $(M,\varphi, \xi,\eta,g)$ satisfying \linebreak $(\xi,\cdot)_{W_2}\cdot S=0$, there is no Ricci soliton with the potential vector field $\xi$.
\end{corollary}

\subsection{$\eta$-Ricci solitons on para-Kenmotsu manifolds \\
satisfying $(\xi,\cdot)_{S}\cdot W_2=0$}

The condition that must be satisfied by $S$ is:
\begin{equation}\label{e7}
S(X,W_2(Y,Z)V)\xi-S(\xi,W_2(Y,Z)V)X+S(X,Y)W_2(\xi,Z)V-$$
$$-S(\xi,Y)W_2(X,Z)V+S(X,Z)W_2(Y,\xi)V-S(\xi,Z)W_2(Y,X)V+$$$$+S(X,V)W_2(Y,Z)\xi-S(\xi,V)W_2(Y,Z)X=0,
\end{equation}
for any $X$, $Y$, $Z$, $V\in \mathfrak{X}(M)$.

Taking the inner product with $\xi$, the relation (\ref{e7}) becomes:
\begin{equation}
S(X,W_2(Y,Z)V)-S(\xi,W_2(Y,Z)V)\eta(X)+$$$$+S(X,Y)\eta(W_2(\xi,Z)V)-S(\xi,Y)\eta(W_2(X,Z)V)+
S(X,Z)\eta(W_2(Y,\xi)V)-$$$$-S(\xi,Z)\eta(W_2(Y,X)V)+S(X,V)\eta(W_2(Y,Z)\xi)-S(\xi,V)\eta(W_2(Y,Z)X)=0,
\end{equation}
for any $X$, $Y$, $Z$, $V\in \mathfrak{X}(M)$.

Replacing the expression of $S$ from (\ref{e1}), we get:
\begin{equation}
(\lambda+1)[g(X,R(Y,Z)V)-\frac{2\lambda+\mu+1-2n}{2n}(g(X,Z)g(Y,V)-g(X,Y)g(Z,V))+$$$$+\frac{2\lambda+\mu+1-4n}{2n}
(\eta(X)\eta(Z)g(Y,V)-\eta(X)\eta(Y)g(Z,V))+$$$$+\frac{(\mu-1)(\lambda+\mu-2n)}{2n}
(\eta(Z)\eta(V)g(X,Y)-\eta(Y)\eta(V)g(X,Z))=0,
\end{equation}
for any $X$, $Y$, $Z$, $V\in \mathfrak{X}(M)$.

For $V:=\xi$ we have:
\begin{equation}
[(\lambda+1)^2+(\lambda+\mu)^2-2n(2\lambda+\mu+1)][\eta(Y)g(X,Z)-\eta(Z)g(X,Y)]=0,
\end{equation}
for any $X$, $Y$, $Z\in \mathfrak{X}(M)$, which is equivalent to
\begin{equation}
[(\lambda+1)^2+(\lambda+\mu)^2-2n(2\lambda+\mu+1)]g(X,R(Y,Z)\xi)=0,
\end{equation}
for any $X$, $Y$, $Z\in \mathfrak{X}(M)$. But $\lambda+\mu=2n$, so $\mu^2-2(n+1)\mu+2n+1=0$ and we can state:

\begin{theorem}
If $(\varphi,\xi,\eta,g)$ is a para-Kenmotsu structure on the $(2n+1)$-dimensional manifold $M$, $(g,\xi,\lambda,\mu)$ is an $\eta$-Ricci soliton on $M$ and $(\xi,\cdot)_{S}\cdot W_2=0$, then $\mu=1$ and $\lambda=2n-1$ or $\mu=2n+1$ and $\lambda=-1$.
\end{theorem}

\begin{corollary}
On a para-Kenmotsu manifold $(M,\varphi, \xi,\eta,g)$ satisfying \linebreak $(\xi,\cdot)_{S}\cdot W_2=0$, there is no Ricci soliton with the potential vector field $\xi$.
\end{corollary}

{\bf Acknowledgement.} The author acknowledges the support by the research grant PN-II-ID-PCE-2011-3-0921.

\bigskip

\textit{Department of Mathematics}

\textit{West University of Timi\c{s}oara}

\textit{Bld. V. P\^{a}rvan nr. 4, 300223, Timi\c{s}oara, Rom\^{a}nia}

\textit{adarablaga@yahoo.com}

\begin{thebibliography}{99}
\bibitem{bag} C. S. Bagewadi, G. Ingalahalli, G., {\em Ricci solitons in Lorentzian $\alpha$-Sasakian manifolds}, Acta Math. Acad. Paedagog. Nyh\'{a}zi. (N.S.) \textbf{28}, no. 1 (2012), 59--68.

\bibitem{ba} C. S. Bagewadi, G. Ingalahalli, S. R. Ashoka, {\em A Study on Ricci Solitons in Kenmotsu Manifolds}, ISRN Geometry, vol. 2013, Article ID 412593, 6 pages, 2013.

\bibitem{be} C. L. Bejan, M. Crasmareanu, {\em Ricci solitons in manifolds with quasi-constant curvature},
Publ. Math. Debrecen \textbf{78}, no. 1 (2011), 235--243.

\bibitem{cr} C. L. Bejan, M. Crasmareanu, {\em Second order parallel tensors and Ricci solitons in $3$-dimensional normal paracontact geometry}, Ann. Global Anal. Geom. \textbf{46}, no. 2 (2014), 117--127.

\bibitem{calv} G. Calvaruso, D. Perrone, {\em Geometry of $H$-paracontact metric manifolds}, arxiv:1307.7662v1.2013.

\bibitem{cacr} C. C\u alin, M. Crasmareanu, {\em Eta-Ricci solitons on Hopf hypersurfaces in complex space forms}, Rev. Roumaine Math. Pures Appl. \textbf{57}, no. 1 (2012), 55--63.

\bibitem{cal} C. C\u alin, M. Crasmareanu, {\em From the Eisenhart problem to Ricci solitons in $f$-Kenmotsu manifolds}, Bull. Malays. Math. Sci. Soc. \textbf{33}, no. 3 (2010), 361--368.

\bibitem{mai} M. C. Chaki, R. K. Maity, {\em On quasi Einstein manifolds}, Publ. Math. Debrecen \textbf{57} (2000), 297--306.

\bibitem{ch} J. T. Cho, M. Kimura, {\em Ricci solitons and real hypersurfaces in a complex space form}, Tohoku Math. J. \textbf{61}, no. 2 (2009), 205--212.

\bibitem{cho} O. Chodosh, F. T.-H. Fong, {\em Rotational symmetry of conical K\"{a}hler-Ricci solitons}, Math. Ann. \textbf{364}, no. 3-4 (2016), 777--792.

\bibitem{chlu} B. Chow, P. Lu, L. Ni, {\em Hamilton's Ricci Flow, Graduate Studies in Mathematics} \textbf{77}, AMS, Providence, RI, USA (2006).

\bibitem{cr1} M. Crasmareanu, {\em Parallel tensors and Ricci solitons in $N(k)$-quasi Einstein manifolds},
Indian J. Pure Appl. Math. \textbf{43}, no. 4 (2012), 359--369.

\bibitem{d:o} P. Dacko, Z. Olszak, {\em On weakly para-cosymplectic manifolds of dimension $3$}, J. Geom. Phys. \textbf{57} (2007), 561--570.

\bibitem{de} S. Deshmukh, H. Al-Sodais, H. Alodan, {\em A note on Ricci solitons}, Balkan J. Geom. Appl. \textbf{16}, no. 1 (2011), 48--55.

\bibitem{fu} A. Futaki, H. Ono, G. Wang, {\em Transverse K\"{a}hler geometry of Sasaki manifolds and toric Sasaki-Einstein manifolds}, J. Differential Geom. \textbf{83}, no. 3 (2009), 585--636.

\bibitem{go} S. Golab, {\em On semi-symmetric and quarter-symmetric linear connections}, Tensor. N. S. \textbf{29} (1975), 293--301.

\bibitem{ham} R. S. Hamilton, {\em The Ricci flow on surfaces}, Math. and general relativity (Santa Cruz, CA, 1986), 237--262, Contemp. Math. \textbf{71}, AMS (1988).

\bibitem{he} C. He, M. Zhu, {\em The Ricci solitons on Sasakian manifolds}, arxiv:1109.4407v2.2011.

\bibitem{ing} G. Ingalahalli, C. S. Bagewadi, {\em Ricci solitons in $\alpha$-Sasakian manifolds}, ISRN Geometry, vol. 2012, Article ID 421384, 13 pages, 2012.

\bibitem{i:vz} S. Ivanov, D. Vassilev, S. Zamkovoy, {\em Conformal paracontact curvature and the local flatness theorem}, Geom. Dedicata \textbf{144} (2010), 79--100.

\bibitem{n} D. H. Jin, {\em The curvatures of lightlike hypersurfaces of an indefinite Kenmotsu manifold}, Balkan J. Geom. Appl. \textbf{17}, no. 1 (2012), 49--57.

\bibitem{ke} K. Kenmotsu, {\em A class of almost contact Riemannian manifolds}, Tohoku Math. J. \textbf{24} (1972), 93--103.

\bibitem{na} H. G. Nagaraja, C. R. Premalatha, {\em Ricci solitons in Kenmotsu manifolds}, J. Math. Anal. \textbf{3}, no. 2 (2012), 18--24.

\bibitem{olsz} Z. Olszak, {\em The Schouten-van Kampen affine connection adapted to an almost (para) contact metric structure}, Publ. Inst. Math. nouv. s\'{e}r. \textbf{94(108)} (2013), 31--42.

\bibitem{po} G. P. Pokhariyal, R. S. Mishra, {\em The curvature tensors and their relativistic significance}, Yokohama Math. J. \textbf{18} (1970), 105--108.

\bibitem{sha} R. Sharma, {\em Certain results on $K$-contact and $(k,\mu)$-contact manifolds}, J. Geom. \textbf{89}, no. 1-2 (2008), 138--147.

\bibitem{sh} R. Sharma, {\em Second order parallel tensor in real and complex space forms},
Internat. J. Math. Math. Sci. \textbf{12}, no. 4 (1989), 787--790.

\bibitem{si} B. B. Sinha, K. L. Sai Prasad, {\em A class of almost para contact metric manifolds},
Bull. Cal. Math. Soc. \textbf{87} (1995), 307--312.

\bibitem{tri} M. M. Tripathi, {\em Ricci solitons in contact metric manifolds}, arXiv:0801.4222.2008.

\bibitem{we} J. We{\l}yczko, {\em Slant curves in $3$-dimensional normal almost paracontact metric manifolds}, Mediterr. J. Math., DOI 10.1007/s00009-013-0361-2 (2013).

\bibitem{s:z} S. Zamkovoy, {\em Canonical connections on paracontact manifolds}, Ann. Global Anal. Geom. \textbf{36}, no. 1 (2008), 37--60.
\end{thebibliography}
\end{document}